\DeclareMathOperator*{\res}{res}
\newtheorem{thrm}{Theorem}[section]
\newtheorem{lem}[thrm]{Lemma}
\newtheorem{cor}[thrm]{Corollary}
\theoremstyle{definition}
\newtheorem{remark}[thrm]{Remark}
\numberwithin{equation}{section}
\def \e {{\epsilon}}
\def \g {{\gamma}}
\def \G {{\Gamma}}
\def \R {{\mathbb R}}
\def \H {{\mathbb H}}
\def \C {{\mathbb C}}
\def \Z {{\mathbb Z}}
\def \Q {{\mathbb Q}}
\def \GmodH {{\Gamma\backslash\H^2}}
\def \GinfG {{\Gamma_\infty\backslash\Gamma}}
\newcommand{\abs}[1]{\left\lvert #1 \right\rvert}
\title[Limits of Eisenstein Series]{Quantum Limits of Eisenstein Series and Scattering states}    
\author{Yiannis N. Petridis}
\address{Department of Mathematics, University College London, Gower Street, London WC1E 6BT, United Kingdom}
\email{i.petridis@ucl.ac.uk}
\author{Nicole Raulf}
\address{Laboratoire Paul Painlev\'e ,
U.F.R. de Math\'ematiques, Universit\'e Lille 1
Sciences et Technologies,
59 655 Villeneuve d'Ascq C\'edex, France 
}
\email{raulf@math.univ-lille1.fr}
\author{Morten S. Risager}
\address{Department of Mathematical
  Sciences, University of Copenhagen, Universitetsparken 5, 2100
  Copenhagen \O, Denmark}
\email{risager@math.ku.dk}
\thanks{The first author was supported by NSF grant DMS-0401318, the second author was supported by a grant from DAAD (German Academic Exchange Service), and the third author was supported by a grant from  The Danish
Natural Science Research Council}
\keywords{Quantum Limits, Eisenstein series, Scattering poles}
\subjclass[2000]{Primary 11F72; Secondary 58G25, 35P25}
\date{\today}
\begin{document}
\begin{abstract} We identify the quantum limits of scattering states  for the modular surface. This is obtained through the study of quantum measures of  non-holomorphic Eisenstein series away from the critical line. We provide a range of stability   for  the quantum unique ergodicity theorem of Luo and Sarnak. 
\end{abstract}

\maketitle

\section{Introduction}
An important problem of quantum chaos is to describe the limiting
behaviour of eigenfunctions. On a compact negatively curved Riemannian manifold $X$
Shnirelman, Colin de Verdi\`ere, and Zelditch \cite{snirelman, deverdiere, zelditch1} have proved that for a 
{\lq generic\rq} family of eigenfunctions $\{\phi_j\}$ of the Laplacian the
associated measures $d\mu_j(z)=\abs{\phi_j(z)}^2d\mu(z)$ converge
weakly to the standard volume element $d\mu(z)$ of $X$, which we write
as 
\begin{equation}\label{weak-convergence}d\mu_j(z)\to d\mu{(z)} \textrm{ as }j\to \infty.\end{equation}
Zelditch \cite{zelditch2}
extended the result to finite volume hyperbolic surfaces. Lindenstrauss  and
Soundararajan \cite{lindenstrauss, sound} have proved that for $X=\GmodH$ where $\G\subset\
\hbox{PSL}_2(\Z)$ is of a certain \emph{arithmetic} type
$(\ref{weak-convergence})$ holds if $\phi_j$ runs through the set of  Hecke--Maa{\ss}
cusp forms. Earlier Luo and Sarnak \cite{luosarnak} investigated the question of quantum
chaos for Eisenstein series $E(z,1/2+it)$, i.e. \emph{generalized} eigenfunctions on
$X=\hbox{PSL}_2(\Z)\backslash\H^2$.   Since this series  is not square
integrable a certain normalization is needed. The actual statement  in \cite{luosarnak} is the following:
Let $A$ and $B$ be compact Jordan measurable subsets of $X$. Then
\begin{equation} \label{luosarnak que}\lim_{t\rightarrow\infty}\frac{\int_A|E(z, 1/2+it)|^2\, d\mu(z)}
{\int_B|E(z, 1/2+it)|^2\, d\mu(z)}=\frac{\mu(A)}{\mu(B)}.
\end{equation}
In fact,  see \cite{luosarnak}, this   follows from the result
\begin{equation}\label{limit}\int_A|E(z, 1/2+it)|^2\, d\mu(z)\sim \frac{6}{\pi}\cdot \mu(A)\log t, \quad t\to\infty.\end{equation}
A general cofinite subgroup likely has 
few embedded eigenvalues, possibly finite, so that (\ref{weak-convergence}) may be irrelevant.  So far  the quantum unique ergodicity of Eisenstein series is unproven for a general cofinite subgroup. A good substitute for the embedded eigenvalues are the scattering poles (resonances).  A natural question is to study the quantum limits of these scattering states. We address this question for $\G=\hbox{PSL}_2(\Z)$. As these states are not in $L^2(\GmodH)$, some normalization is also needed. 
Consider a simple pole $\rho$ of the scattering matrix. By the explicit calculation of the scattering matrix, see (\ref{scattering}), such
a pole is equal to half a zero of the Riemann zeta function. The
Eisenstein series has a pole at this point also and the residue has
Fourier expansion
\begin{equation}\label{scatteringfourierexpansion}
  \res_{s=\rho}E(z,s)=(\res_{s=\rho}\phi(s))y^{1-\rho}+\sum_{m\neq
  0}c_{m}\sqrt{y}K_{\rho-1/2}(2\pi m y)e^{2\pi i m x}.
\end{equation}
These scattering states are formal eigenfunctions of the Laplace
operator.
  We choose to normalize them as
follows: Set
\begin{equation}
  u_\rho(z)=(\res_{s=\rho}\phi(s))^{-1} \res_{s=\rho}E(z,s),
\end{equation}
so that the
scattering functions have the simplest possible growth behaviour at infinity, namely
$y^{1-\rho}$. 

We let $\{\gamma_n\}$ be a sequence of zeroes of the Riemann zeta
function  with $1/2 \leq \Re(\gamma_n)$ which satisfies $\lim_n
\gamma_n=\gamma_\infty<1$. Automatically $\Re(\gamma_n)<1$ and the Riemann hypothesis is equivalent to $\gamma_n=\gamma_\infty=1/2$, but we shall not
assume it. The points $\rho_n=\gamma_n/2$ are poles of the scattering matrix
$\phi(s)$.

\begin{thrm}\label{scatteringtheorem} Let $A$ be a compact Jordan measurable subset of $X$. Then  \begin{equation*}
    \int_{A}\abs{u_{\rho_n}(z)}^2d\mu(z)\to\int_AE(z,2-\gamma_\infty)d\mu(z)
  \end{equation*}
as $n\to\infty$.
This means that the quantum limit  of the measures $\abs{u_{\rho_n}(z)}^2d\mu(z)$ is the invariant, absolutely continuous measure $E(z,2-\gamma_\infty)d\mu(z)$.
\end{thrm}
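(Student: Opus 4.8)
The plan is to establish the asserted weak convergence by testing $\abs{u_{\rho_n}}^2d\mu(z)$ against a spanning family of smooth functions and then passing to $\chi_A$ by a Jordan–measurability squeeze. Write $\rho_n=\gamma_n/2$ and $\beta_n=\Re(\gamma_n)\to\gamma_\infty$, and note that since the $\gamma_n$ are nontrivial zeros of $\zeta$ their ordinates satisfy $t_n:=\Im(\gamma_n)\to\infty$, so this is a high-energy limit in the spirit of Luo–Sarnak. From (\ref{scatteringfourierexpansion}) and the normalization, $u_{\rho_n}(z)=y^{1-\rho_n}+\sum_{m\neq 0}a_m(\rho_n)\sqrt{y}K_{\rho_n-1/2}(2\pi\abs{m}y)e^{2\pi imx}$, where the residue of $1/\zeta(2s)$ cancels in the ratio, so that $a_m(\rho_n)=2\pi^{\rho_n}\abs{m}^{\rho_n-1/2}\sigma_{1-2\rho_n}(\abs{m})/(\sqrt{\pi}\,\Gamma(\rho_n-1/2)\zeta(2\rho_n-1))$. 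I would test against (i) incomplete Eisenstein series $F_h(z)=\sum_{\gamma\in\GinfG}h(\Im(\gamma z))$ with $h\in C_c^\infty(\R_+)$, and (ii) Hecke–Maa{\ss} cusp forms $\psi_j$. Since $F_h$ is compactly supported on $X$ and the $\psi_j$ decay rapidly, it suffices to prove convergence against each such test function; an approximation of $\chi_A$ from above and below then handles the theorem (legitimate because $\partial A$ is $\mu$-null), the only extra input being that $\int_{\{y>Y\}}\abs{u_{\rho_n}}^2$, weighted by a fixed rapidly decaying function, is uniformly small in $n$, which follows from the growth $y^{2-\beta_n}$ of the zeroth Fourier coefficient.

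For a cusp form $\psi_j$ the target is $\inprod{E(\cdot,2-\gamma_\infty)}{\psi_j}=0$, which holds by unfolding since $\psi_j$ has vanishing constant term; so I must show $\inprod{\abs{u_{\rho_n}}^2}{\psi_j}\to 0$. Writing $\abs{u_{\rho_n}}^2=\abs{\res_{s=\rho_n}\phi}^{-2}(\res_{s=\rho_n}E(\cdot,s))(\res_{s=\bar\rho_n}E(\cdot,s))$ and unfolding one Eisenstein factor against $\overline{u_{\rho_n}\psi_j}$, the Rankin–Selberg method expresses the inner integral as an explicit product of Gamma factors and $L(s-\bar\rho_n+\tfrac12,\psi_j)L(s+\bar\rho_n-\tfrac12,\psi_j)/\zeta(2s)$; its residue at $s=\rho_n$ arises from $\zeta(2\rho_n)=\zeta(\gamma_n)=0$, and the factor $\zeta'(\gamma_n)$ cancels against $(\res_{s=\rho_n}\phi)^{-1}$. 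One is left with a prefactor of Gamma functions times $L(\tfrac12+it_n,\psi_j)L(\beta_n-\tfrac12,\psi_j)$. By Stirling the net Gamma contribution decays like $e^{-\pi t_n/4}$, which overwhelms the at most polynomial (convexity) growth of the $L$-values, so the cuspidal contribution tends to $0$ as required. (For odd $\psi_j$ it vanishes identically by parity.)

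For the incomplete Eisenstein series I would unfold to reduce to the zeroth Fourier coefficient: $\inprod{\abs{u_{\rho_n}}^2}{F_h}=\int_0^\infty c_n(y)h(y)\,dy/y^2$ with $c_n(y)=y^{2-\beta_n}+2y\sum_{m\geq 1}\abs{a_m(\rho_n)}^2\abs{K_{\rho_n-1/2}(2\pi my)}^2$. The leading term converges trivially, $\int_0^\infty y^{2-\beta_n}h(y)\,dy/y^2\to\int_0^\infty y^{2-\gamma_\infty}h(y)\,dy/y^2$, reproducing the $y^{2-\gamma_\infty}$ part of the constant term of $E(z,2-\gamma_\infty)$. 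For the Bessel sum I would compute its Mellin transform using the Ramanujan identity $\sum_m\sigma_{1-2\rho_n}(m)\sigma_{1-2\bar\rho_n}(m)m^{-s}=\zeta(s)\zeta(s-1+\gamma_n)\zeta(s-1+\bar\gamma_n)\zeta(s-2+2\beta_n)/\zeta(2s-2+2\beta_n)$ together with the standard closed form for $\int_0^\infty y^{w}K_{\rho_n-1/2}(ay)K_{\bar\rho_n-1/2}(ay)\,dy$; this renders the Mellin transform of $c_n-y^{2-\beta_n}$ an explicit ratio of zeta functions times Gamma factors. Inserting it into Mellin inversion against $h$ and moving the contour, I expect the relevant residue to contribute exactly $\phi(2-\gamma_\infty)\,y^{\gamma_\infty-1}$ in the limit, matching the remaining part of the constant term of $E(z,2-\gamma_\infty)$.

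The hard part will be this final limit. The difficulty is twofold. First, $\abs{a_m(\rho_n)}^2$ grows like $e^{\pi t_n/2}$ through $1/\abs{\Gamma(\rho_n-1/2)}^2$ (with $1/\abs{\zeta(\gamma_n-1)}^2$ contributing only polynomially) while each $\abs{K_{\rho_n-1/2}(2\pi my)}^2$ is exponentially small of the same order, so the finite limit is produced only by a cancellation invisible term-by-term and best seen on the Mellin side. Second, the explicit zeta-ratio carries genuinely oscillatory factors such as $\abs{\zeta(2-\beta_n+it_n)}^2$ as $t_n\to\infty$; the residue at the surviving pole and the shifted contour integral each oscillate, and the clean limit emerges only after these oscillations are shown to cancel. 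Controlling the shifted contour uniformly in $n$ — via Stirling for the Gamma factors and standard convexity and growth bounds for $\zeta$ on vertical lines, with the zero $\zeta(\gamma_n)=0$ absorbed by the normalization — so as to isolate the contribution $\phi(2-\gamma_\infty)y^{\gamma_\infty-1}$ and discard the remainder as $o(1)$, is where the real work of the proof lies.
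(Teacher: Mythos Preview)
Your program is essentially to redo, for the scattering states, the analysis that the paper carries out in Lemmas~\ref{maass estimates} and~\ref{incomplete eisenstein estimates} to prove Theorem~\ref{maintheorem2}. The paper bypasses all of this with a one-line observation you missed: the functional equation $E(z,s)=\phi(s)E(z,1-s)$ gives
\[
u_{\rho_n}(z)=(\res_{s=\rho_n}\phi(s))^{-1}\res_{s=\rho_n}\bigl(\phi(s)E(z,1-s)\bigr)=E(z,1-\rho_n),
\]
since $E(z,1-s)$ is regular at $s=\rho_n$. Thus $\abs{u_{\rho_n}}^2d\mu=\abs{E(z,1-\rho_n)}^2d\mu=d\mu_{s(t)}$ with $s(t)=1-\rho_n$, whose real part tends to $\sigma_\infty=1-\gamma_\infty/2>1/2$; Theorem~\ref{maintheorem2} then gives the limit $E(z,2\sigma_\infty)d\mu=E(z,2-\gamma_\infty)d\mu$ immediately. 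Your route is not wrong in spirit --- it amounts to reproving the relevant special case of Theorem~\ref{maintheorem2} from scratch --- but it is much longer, and the identity $u_{\rho_n}=E(z,1-\rho_n)$ also dissolves all the ``oscillation'' worries you flag at the end, because the zero $\zeta(\gamma_n)=0$ and the normalising $\res_{s=\rho_n}\phi(s)$ cancel \emph{before} any analysis begins.

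There is also a concrete error in your cuspidal estimate. The Stirling exponentials in the Rankin--Selberg Gamma block cancel completely against those coming from $\abs{\Gamma(s(t))}^{-2}$ (equivalently, from the normalisation); the net Gamma contribution decays only polynomially, of order $\abs{t_n}^{1/2-2\sigma_t}$ with $\sigma_t=1-\beta_n/2$, not like $e^{-\pi t_n/4}$. What actually kills the cuspidal term when $\sigma_\infty>1/2$ is this polynomial factor $\abs{t_n}^{1/2-2\sigma_t}$ (with exponent strictly below $-1/2$) combined with the convexity bound $L(\psi_j,1/2+it_n)\ll\abs{t_n}^{1/2+\e}$; see the proof of Lemma~\ref{maass estimates}. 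Your conclusion there is correct, but the mechanism you describe is not.
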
 

\begin{remark}
We know that a positive proportion of the zeros $\rho$ of $\zeta (s)$ lie on the critical line and are simple. In fact, this proportion is at least 40.58\%, see \cite{conrey}. 
Under the Riemann hypothesis and the conjectured simplicity of the Riemann zeros, there is only one quantum limit, the one described in the theorem above: $E(z, 3/2)d\mu (z)$.
\end{remark}

Theorem \ref{scatteringtheorem} follows rather easily by studying the quantum limits of Eisenstein series off the critical line. We present two such  theorems. The first addresses the stability of 
(\ref{luosarnak que})  if,  instead of real spectral value $1/4+t^2$, we move in the complex plane. To be precise let 
\begin{equation*}d\mu_{s(t)}(z)=|E(z, s(t))|^2\, d\mu(z),\end{equation*}
where \begin{equation*}s(t)=\sigma_t+it,\end{equation*} $ \sigma_t> 1/2 $. We investigate what happens
in the limit as $t \to \infty$, assuming that
$\sigma_{t}\to\sigma_\infty\geq 1/2$. We find qualitative differences
depending on whether $\sigma_{\infty}=1/2$ or not. If
$\sigma_\infty=1/2$ the situation is very similar to that of \cite{luosarnak}.

\begin{thrm}\label{maintheorem1}
 Assume $\sigma_\infty= 1/2$ {and $(\sigma_t-1/2)\log
 t\to 0$}. Let $A$, $B$ be compact Jordan measurable
 subsets of $X$. Then 
 \begin{equation*}
   \frac{\mu_{s(t)}(A)}{\mu_{s(t)}(B)}\to\frac{\mu(A)}{\mu(B)}
 \end{equation*}
as $t\to\infty$.
In fact  we have 
\begin{equation}
  \label{eq:2}
  \mu_{s(t)}(A)\sim\mu(A){\frac{6}{\pi}\log t}.
\end{equation}
\end{thrm}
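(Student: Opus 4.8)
The plan is to adapt the method of Luo and Sarnak behind \eqref{limit}, keeping careful track of the dependence on $\sigma_t$ and showing that the hypothesis $(\sigma_t-1/2)\log t\to 0$ is exactly what is needed to reproduce their asymptotic. Since $A$ is compact and Jordan measurable, it suffices to prove that the normalized measures $\bigl(\tfrac{6}{\pi}\log t\bigr)^{-1}\abs{E(z,s(t))}^2 d\mu(z)$ converge weakly to $d\mu(z)$ when tested against the functions spanning the relevant spectral classes: incomplete Eisenstein series $P_h(z)=\sum_{\gamma\in\GinfG}h(\Im(\gamma z))$ with $h\in C_c^\infty(\R_{>0})$, and Hecke--Maa{\ss} cusp forms $u_j$. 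For Jordan measurable $A$ one then sandwiches $\mathbf 1_A$ between such test functions whose integrals differ by at most $\e$, exactly as in \cite{luosarnak, zelditch2}.

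First I would treat the incomplete Eisenstein series, which produce the main term. Unfolding gives
\[
\int_X \abs{E(z,s(t))}^2 P_h(z)\,d\mu(z)=\int_0^\infty\left(\int_0^1\abs{E(x+iy,s(t))}^2\,dx\right)h(y)\frac{dy}{y^2},
\]
and Parseval in $x$ splits this into the contribution of the constant term $a_0(y)=y^{s(t)}+\phi(s(t))y^{1-s(t)}$ and that of the nonzero frequencies. Taking Mellin transforms, the constant-term part equals $\tilde h(2\sigma_t-1)+\abs{\phi(s(t))}^2\tilde h(1-2\sigma_t)+o(1)$, the cross term decaying rapidly since $\tilde h(2it)\to 0$. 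For the nonzero frequencies I would insert the explicit Fourier coefficients, proportional to $\abs{n}^{s-1/2}\sigma_{1-2s}(\abs n)/(\Gamma(s)\zeta(2s))$, evaluate the Bessel integral $\int_0^\infty K_{s-1/2}(2\pi\abs n y)K_{\bar s-1/2}(2\pi\abs n y)\,h(y)\,dy/y$ by its Mellin transform (a product of four Gamma factors), and sum the resulting Dirichlet series $\sum_n\abs{\sigma_{1-2s}(n)}^2 n^{-w}$ via the Ramanujan identity into a quotient of Riemann zeta functions. Shifting the contour to the pole at $w=1$ then yields the leading term $\tfrac{6}{\pi}\log t\int_X P_h\,d\mu$.

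The role of the hypothesis becomes visible precisely here, and I expect it to be the main obstacle. The $\sigma_t$-dependent distortions all take the shape of the factor $\abs{n}^{2\sigma_t-1}$ arising from the squared coefficients, together with shifts of the Gamma and zeta arguments by $O(\sigma_t-1/2)$. Since the $n$-sum is effectively truncated at $n\asymp t$ (the transition range of $K_{(\sigma_t-1/2)+it}(2\pi n y)$ for $y$ in the support of $h$), these distortions are of size $t^{\pm(2\sigma_t-1)}=\exp(\pm(2\sigma_t-1)\log t)$, which tends to $1$ exactly because $(\sigma_t-1/2)\log t\to 0$; the same input gives $\abs{\phi(s(t))}^2=\exp(-4(\sigma_t-1/2)\log t+o(1))\to 1$, using $\Re(\phi'/\phi)(1/2+it)\sim -2\log t$, so the constant-term part stays bounded and the leading term is the same as on the critical line. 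The hard part is to make this uniform: one must show that the whole contour integral, not merely its individual factors, differs from its value at $\sigma_t=1/2$ by a multiplicative $1+o(1)$, which requires quantitative bounds for $\zeta$ and $\zeta'/\zeta$ near $\Re s=1$ together with the rapid decay of $\tilde h$ on vertical lines, all uniform in the moving parameter $\sigma_t$.

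Finally I would show that the cuspidal test functions contribute $o(\log t)$. Expressing $\inprod{\abs{E(\cdot,s(t))}^2}{u_j}$ through the Rankin--Selberg/triple-product method writes it in terms of $L$-values of $u_j$ and $\mathrm{sym}^2 u_j$ evaluated at points shifted off the critical line by $O(\sigma_t-1/2)$; the convexity, or subconvexity, bounds used by Luo and Sarnak extend to this shifted range by the same uniformity argument, giving a bound that is $o(\log t)$. Combining the three contributions and passing through the Jordan sandwich yields $\mu_{s(t)}(A)\sim\tfrac{6}{\pi}\mu(A)\log t$, from which the ratio statement follows immediately.
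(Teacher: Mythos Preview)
Your overall architecture is exactly that of the paper: spectral decomposition into Hecke--Maa{\ss} cusp forms and incomplete Eisenstein series, unfolding and Parseval, Mellin inversion, the Ramanujan identity and the Weber--Schafheitlin integral for the Bessel part, a contour shift, and finally the Jordan-measurable approximation argument. The cuspidal estimate is also essentially right (one minor point: the two $L$-values that appear are $L(\phi_j,1/2-2it)$ and $L(\phi_j,2\sigma_t-1/2)$, so only the second is shifted off the critical line; subconvexity is applied to the first, and the paper actually gets $o(1)$, not merely $o(\log t)$).

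There is, however, a genuine gap in how you extract the main term from the nonconstant Fourier contribution. You write ``shifting the contour to the pole at $w=1$ then yields the leading term'', and then frame the effect of $\sigma_t>1/2$ as a multiplicative distortion of size $t^{\pm(2\sigma_t-1)}=1+o(1)$ on the Luo--Sarnak answer. That is not what happens. Once $\sigma_t>1/2$ the double pole at $s=1$ in the integrand
\[
B(s)=H(s)\,\frac{\xi(s-2\sigma_t+1)\,\xi(s-2it)\,\xi(s+2it)\,\xi(s+2\sigma_t-1)}{\xi(2s)}
\]
\emph{splits} into two simple poles, at $s=2\sigma_t$ and $s=2-2\sigma_t$. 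Their residues (after dividing by $\abs{\xi(2s(t))}^2$) are
\[
H(2\sigma_t)\frac{\xi(4\sigma_t-1)}{\xi(4\sigma_t)}\sim \frac{H(1)}{4\xi(2)(\sigma_t-1/2)},\qquad
H(2-2\sigma_t)\abs{\phi(s(t))}^2\frac{\xi(3-4\sigma_t)}{\xi(4-4\sigma_t)}\sim -\frac{H(1)\abs{\phi(s(t))}^2}{4\xi(2)(\sigma_t-1/2)},
\]
so each \emph{individually diverges} as $\sigma_t\to 1/2$; there is no sense in which either is a $1+o(1)$ perturbation of anything. The main term is their sum,
\[
\frac{H(1)}{4\xi(2)}\cdot\frac{1-\abs{\phi(s(t))}^2}{\sigma_t-1/2}+O(1),
\]
and it is precisely this difference quotient that one must evaluate. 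The paper does so by applying the mean value theorem (twice) to $\sigma\mapsto\phi(\sigma+it)\phi(\sigma-it)$ on $[1/2,\sigma_t]$, reducing to $(\phi'/\phi)(\sigma'\pm it)\sim -4\log t$ uniformly for $\sigma'\in[1/2,\sigma_t]$; the hypothesis $(\sigma_t-1/2)\log t\to 0$ is used to kill the second-order remainder, yielding $\frac{1-\abs{\phi(s(t))}^2}{4\xi(2)(\sigma_t-1/2)}\sim\frac{6}{\pi}\log t$. You already have the key ingredient---you note $\abs{\phi(s(t))}^2\to 1$ via $\phi'/\phi$---but you invoke it only to bound the constant term, whereas in fact it is the engine producing the $\log t$. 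Reorganize the argument around the two split poles and this difference quotient, and the proof goes through.
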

  Theorem \ref{maintheorem1}
implies that the quantum unique ergodicity of Eisenstein series holds
in a quite big region in the complex plane (physical plane): for
spectral value $\lambda$ tending to infinity, the results holds as
long as $\Im (\lambda)={o(\sqrt{\Re {(\lambda)}}/\log\Re{(\lambda)})}$ in the region $\Re
(\lambda)\ge 0$. To see this we write 
$\lambda=s(1-s)$ with $s=\sigma+it$ and $\sigma\ge 1/2$. Then $\Im (\lambda)={o(\sqrt{\Re {(\lambda)}}/\log\Re{(\lambda)})}$
implies $\Re (\lambda)=\sigma(1-\sigma)+t^2\to \infty.$ We easily deduce that $\sigma$ is bounded. Then $(1-2\sigma)t=o(\sqrt{\sigma (1-\sigma)+t^2}){/\log{(\sigma(1-\sigma)+t^2))}}$ gives
$(\sigma-1/2)\log t\to 0$. So we can apply Theorem \ref{maintheorem1}.

Surprisingly the situation is qualitatively different when $\sigma_\infty>1/2$. We prove the following theorem:
\begin{thrm}\label{maintheorem2} Assume $\sigma_\infty>
  1/2$. Let $A$ be a compact Jordan measurable subset of $X$.  Then 
\begin{equation*}\mu_{s(t)}(A)\to\int_AE(z,2\sigma_\infty)d\mu(z)
\end{equation*}
as $t\to \infty$.
\end{thrm}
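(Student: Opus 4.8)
The plan is to prove vague convergence of the locally finite measures $d\mu_{s(t)}=\abs{E(z,s(t))}^2d\mu$ to $E(z,2\sigma_\infty)\,d\mu$ and then pass to indicator functions of Jordan measurable sets by a squeezing argument. Note first that since $\sigma_\infty>1/2$ we have $2\sigma_\infty>1$, so $E(z,2\sigma_\infty)$ is given by the absolutely convergent series and is a genuine, locally bounded automorphic function (the only pole of $E(z,w)$ in $\Re(w)>1/2$ sits at $w=1$, i.e.\ at $\sigma_\infty=1/2$, which is excluded); both $\abs{E(z,s(t))}^2$ and the limit grow like $y^{2\sigma_\infty}$ in the cusp, so both measures are infinite but locally finite and testing is against $C_c(X)$. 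I would first reduce to a spanning family of test functions: the compactly supported incomplete Eisenstein series $E_h(z)=\sum_{\gamma\in\GinfG}h(\Im(\gamma z))$ with $h\in C_c^\infty((0,\infty))$ (these are supported on a compact part of $X$), together with the Maass cusp forms $u_j$. Since $A$ is Jordan measurable I approximate $\mathbf 1_A$ from above and below by smooth compactly supported functions, expand these spectrally, and use continuity of the limiting density on the compact set $A$ to squeeze.

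For the incomplete Eisenstein series I would unfold: $\int_X\abs{E(z,s(t))}^2E_h(z)\,d\mu=\int_0^\infty a_0(y,s(t))h(y)\,\frac{dy}{y^2}$, where $a_0(y,s)$ is the zeroth Fourier coefficient of $\abs{E(z,s)}^2$ in $x$. Writing $s=\sigma+it$ and using the Fourier expansion $E(z,s)=y^s+\phi(s)y^{1-s}+\sum_{n\neq0}c_n(s)\sqrt{y}K_{s-1/2}(2\pi\abs{n}y)e^{2\pi inx}$, the diagonal produces
\[
a_0(y,s)=y^{2\sigma}+\phi(s)y^{1-2it}+\overline{\phi(s)}y^{1+2it}+\abs{\phi(s)}^2y^{2-2\sigma}+\sum_{n\neq0}\abs{c_n(s)}^2\,yK_{s-1/2}(2\pi\abs{n}y)K_{\bar s-1/2}(2\pi\abs{n}y).
\]
Here $y^{2\sigma}\to y^{2\sigma_\infty}$; the two oscillatory terms $y^{1\mp2it}$ integrate to $0$ as $t\to\infty$ by Riemann--Lebesgue (equivalently, integration by parts in the Mellin variable), provided $\phi(s(t))$ stays bounded; and the term $\abs{\phi(s)}^2y^{2-2\sigma}$ vanishes because $\abs{\phi(s(t))}\to0$. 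The latter I would get from $\phi(s)=\sqrt\pi\,\Gamma(s-1/2)\Gamma(s)^{-1}\zeta(2s-1)\zeta(2s)^{-1}$: Stirling gives $\abs{\Gamma(s-1/2)/\Gamma(s)}\asymp t^{-1/2}$, $\zeta(2s)^{-1}\asymp1$, while the convexity bound gives $\zeta(2s-1)\ll t^{1-\sigma+\e}$, so $\abs{\phi(s(t))}\ll t^{1/2-\sigma+\e}\to0$ as $\sigma\to\sigma_\infty>1/2$.

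The heart of the matter is the Bessel sum. The target is $\int_X E(z,2\sigma_\infty)E_h\,d\mu=\int_0^\infty\bigl(y^{2\sigma_\infty}+\phi(2\sigma_\infty)y^{1-2\sigma_\infty}\bigr)h(y)\,\frac{dy}{y^2}$, the right-hand factor being exactly the zeroth Fourier coefficient of $E(z,2\sigma_\infty)$; since the three other non-leading pieces of $a_0$ vanish in the limit, the Bessel sum must converge (against $h(y)\,dy/y^2$) to $\phi(2\sigma_\infty)y^{1-2\sigma_\infty}$. I would compute it by Mellin transform in $y$: the closed form $\int_0^\infty x^{w-1}K_\mu(x)K_\nu(x)\,dx=\frac{2^{w-3}}{\Gamma(w)}\Gamma(\tfrac{w+\mu+\nu}2)\Gamma(\tfrac{w+\mu-\nu}2)\Gamma(\tfrac{w-\mu+\nu}2)\Gamma(\tfrac{w-\mu-\nu}2)$ with $\mu=s-1/2$, $\nu=\bar s-1/2$ turns the $n$-sum into a Rankin--Selberg Dirichlet series $\sum_{n\geq1}\abs{\sigma_{1-2s}(n)}^2n^{2\sigma-1-w}$, which by Ramanujan's identity is a ratio of four $\zeta$-values over a single $\zeta$. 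This yields a meromorphic function of $w$; Stirling shows the factor $e^{-\pi t}$ from $\Gamma(\tfrac w2+it)\Gamma(\tfrac w2-it)$ cancels the $e^{-\pi t}$ hidden in $\abs{c_n(s)}^2$ through $\abs{\Gamma(s)}^{-2}$, leaving polynomial behaviour in $t$. Shifting the contour and letting $t\to\infty$, I expect exactly one surviving pole, producing $\phi(2\sigma_\infty)y^{1-2\sigma_\infty}$; pinning down this pole and justifying the contour shift together with the interchange of limit and sum/integral is the main obstacle.

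Finally, for a cusp form $u_j$ one has $\inprod{E(\cdot,2\sigma_\infty)}{u_j}=0$ by unfolding (cusp forms have vanishing constant term), so it remains to show $\int_X\abs{E(z,s(t))}^2u_j\,d\mu\to0$. Unfolding one Eisenstein factor and applying Rankin--Selberg expresses this triple product as a ratio of Gamma factors times $L(u_j,2\sigma-1/2)\,L(u_j,1/2+2it)$ (up to $\zeta$-normalisation); here $L(u_j,2\sigma-1/2)$ is bounded, $L(u_j,1/2+2it)\ll t^{1/2+\e}$ by convexity, and Stirling shows the surviving Gamma ratio supplies a negative power of $t$, whence the whole expression tends to $0$. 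Assembling the three computations in the spectral expansion of a smooth compactly supported test function, controlling the cuspidal sum uniformly in $j$ (routine, via decay of Fourier coefficients and the rapid decay of $\inprod{\psi}{u_j}$), and then squeezing $\mathbf 1_A$, gives the asserted convergence $\mu_{s(t)}(A)\to\int_AE(z,2\sigma_\infty)\,d\mu$.
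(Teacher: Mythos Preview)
Your proposal is correct and follows essentially the same route as the paper: spectral decomposition into Maa{\ss} cusp forms and incomplete Eisenstein series, unfolding plus Parseval for the latter, evaluation of the Bessel sum via the Mellin transform using the $K_\mu K_\nu$ integral and Ramanujan's identity, and the observation that for $\sigma_\infty>1/2$ only convexity bounds are needed (in particular $\abs{\phi(s(t))}\to0$ from Stirling plus convexity, and $L(u_j,1/2+2it)\ll t^{1/2+\e}$ suffices against the Gamma ratio $\ll t^{1/2-2\sigma_t}$). The paper carries out exactly the contour shift you anticipate, moving to $\Re(s)=1/2$: several residues are crossed (at $1\pm2it$, $2\sigma_t$, and either $2-2\sigma_t$ or $2\sigma_t-1$), but only the one at $s=2\sigma_t$ survives in the limit and yields $H(2\sigma_\infty)\,\xi(4\sigma_\infty-1)/\xi(4\sigma_\infty)=\phi(2\sigma_\infty)H(2\sigma_\infty)$, while the remaining line integral is $o(1)$ by Stirling cancellation of the $e^{-\pi t}$ factors and convexity for $\zeta$; this is precisely the ``main obstacle'' you flagged, and your expectations about it are accurate.
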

This proves that, when  $\sigma_\infty> 1/2$, the measures
$d\mu_{s(t)}$ do not become equidistributed. In fact it suggests in
this case to
consider different measures
\begin{equation}
  \label{eq:3}
  \nu_{s(t)}(z)=\abs{\frac{E(z,s(t))}{\sqrt{E(z,2\sigma_\infty)}}}^2d\mu (z).
\end{equation}
We note that, since $2\sigma_\infty>1$, we have
$E(z,2\sigma_\infty)>0$. The downside of this definition is 
that the function  $E(z,s(t))/\sqrt{E(z,2\sigma_\infty)}$ is not   an eigenfunction of the Laplacian  in contrast to $E(z, s(t))$. The upside is
that the corresponding measures become equidistributed:
\begin{cor}\label{maintheorem3} Assume $\sigma_\infty>  1/2$. Let $A$ be a compact Jordan measurable subset of $X$. Then 
\begin{equation*}\nu_{s(t)}(A)\to\mu(A), \quad t\to \infty.
\end{equation*}
\end{cor}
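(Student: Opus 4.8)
The plan is to reduce the corollary to Theorem \ref{maintheorem2} by viewing $\nu_{s(t)}(A)$ as the integral of a fixed weight against the measures $d\mu_{s(t)}$. By the definition in (\ref{eq:3}),
\[
  \nu_{s(t)}(A)=\int_A\frac{1}{E(z,2\sigma_\infty)}\,d\mu_{s(t)}(z),
\]
so it suffices to prove that $\int_A f\,d\mu_{s(t)}\to\int_A f(z)E(z,2\sigma_\infty)\,d\mu(z)$ for the specific weight $f(z)=1/E(z,2\sigma_\infty)$, since the right-hand side equals $\int_A\,d\mu(z)=\mu(A)$. The content of Theorem \ref{maintheorem2} is exactly this statement with $f$ replaced by an indicator $\mathbf{1}_{A'}$ of a compact Jordan set, so the whole task is to upgrade that convergence from indicators to the weight $f$.

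First I would record the two analytic facts that make the weight harmless. On the compact set $A$ the function $E(z,2\sigma_\infty)$ is continuous and, because $2\sigma_\infty>1$, strictly positive (as already noted in the text); hence on $A$ it is bounded above and below by positive constants. Consequently $f=1/E(\cdot,2\sigma_\infty)$ is a bounded, uniformly continuous, positive function on $A$, and $f\mathbf{1}_A$ is Riemann integrable, its only discontinuities lying on $\partial A$, a set of $\mu$-measure zero by Jordan measurability. The main step is then a sandwiching argument. Given $\e>0$, I partition $A$ into finitely many bounded Jordan measurable pieces $A_1,\dots,A_N$ on which the oscillation of $f$ is at most $\e$ (possible by uniform continuity; the pieces may be taken Jordan measurable, e.g. as $A_j=A\cap\{c_{j-1}\le f<c_j\}$ for thresholds $c_j$ avoiding the countably many levels with positive-measure fibres), and form step functions $g_-\le f\le g_+$, constant on each $A_j$, with $g_+-g_-\le\e$. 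Applying Theorem \ref{maintheorem2} to each $A_j$ gives $\mu_{s(t)}(A_j)\to\int_{A_j}E(z,2\sigma_\infty)\,d\mu$, whence
\[
  \int_A g_\pm\,d\mu_{s(t)}\longrightarrow\int_A g_\pm(z)\,E(z,2\sigma_\infty)\,d\mu(z)\qquad(t\to\infty).
\]
Sandwiching and passing to the limit yields
\[
  \int_A g_-E\,d\mu\le\liminf_{t}\nu_{s(t)}(A)\le\limsup_{t}\nu_{s(t)}(A)\le\int_A g_+E\,d\mu .
\]

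To finish, I use that $g_\pm$ differ from $f$ by at most $\e$ on $A$, so $g_-E\le 1\le g_+E$ and $|g_\pm E-1|\le\e\,E(z,2\sigma_\infty)$ pointwise on $A$; since $\int_A E(z,2\sigma_\infty)\,d\mu$ is finite, both outer integrals lie within $\e\int_A E(z,2\sigma_\infty)\,d\mu$ of $\int_A fE\,d\mu=\mu(A)$. Letting $\e\to0$ then forces $\nu_{s(t)}(A)\to\mu(A)$. The only genuinely delicate point is the passage from the indicator-function convergence of Theorem \ref{maintheorem2} to the continuous weight $f$: one must verify the Jordan measurability of the approximating pieces $A_j$ and observe that the conclusion of Theorem \ref{maintheorem2} is insensitive to adding or deleting the measure-zero boundary (so that it applies to bounded Jordan sets with compact closure, not only to closed ones). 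The positivity and continuity of $E(z,2\sigma_\infty)$ on the compact set $A$ are precisely what render this bookkeeping routine, and I expect no further analytic obstacle, since the hard harmonic-analytic input is already packaged in Theorem \ref{maintheorem2}.
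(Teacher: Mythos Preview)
Your proof is correct and follows the same strategy as the paper: rewrite $\nu_{s(t)}(A)$ as $\int_A E(z,2\sigma_\infty)^{-1}\,d\mu_{s(t)}$ and appeal to Theorem \ref{maintheorem2}. The only cosmetic difference is the direction of the approximation: the paper invokes the test-function form of Theorem \ref{maintheorem2} (which is what Lemmas \ref{maass estimates} and \ref{incomplete eisenstein estimates} actually establish) with test function $f/E(\cdot,2\sigma_\infty)$ and then cites the approximation argument of \cite[Proposition 2.3]{luosarnak} to pass to indicators, whereas you start from the indicator form as stated and approximate the continuous weight $1/E$ by step functions on Jordan pieces of $A$. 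Your observation that $\mu_{s(t)}$ is absolutely continuous with respect to $\mu$, so that $\mu$-null boundaries are also $\mu_{s(t)}$-null, is exactly what makes the passage to non-closed Jordan pieces legitimate.
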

The result of Theorem \ref{maintheorem2} looks similar to Theorem 1 in \cite{guillarmou}, where the authors consider the equidistribution of Eisenstein series for convex co-compact 
subgroups $\Gamma$ of $\hbox{Iso} ({\H}^{n+1})$ with Hausdorff dimension of the limit set $\delta_{\Gamma}$ satisfying $\delta_{\Gamma}<n/2$. In both theorems the Eisenstein series $E(z, 2\sigma_{\infty})$ is well-defined and $E(z, 2\sigma_{\infty})d\mu(z)$ is the  quantum limit.

Similar results to Theorem \ref{scatteringtheorem} and Theorem \ref{maintheorem2} for more general surfaces with cuspidal ends have recently been announced by Dyatlov \cite{dyatlov}.

\begin{remark} 

The crucial ingredients  in \cite{luosarnak} are
\begin{enumerate}[(i)]
\item \label{one} A subconvex estimate for the $L$-series of a Maa{\ss} cusp form on its critical line, e.g. $L(\phi_j, 1/2+it)\ll (1+ |t|)^{1/3+\epsilon}$, see \cite{meurman}.

\item \label{two}A subconvex estimate for  the Riemann zeta function on its critical line, e.g. $\zeta (1/2+it)\ll (1+|t|)^{1/6+\epsilon}$.

\item \label{three} Estimates for $\zeta (1+it)$ and $(\zeta'/\zeta) (1+it)$.
\end{enumerate}

For Theorem \ref{maintheorem1} above we use subconvex bounds
  on $L$-functions and $\zeta(s)$. When
  $\sigma_\infty>1/2$, i.e. in Theorem \ref{maintheorem2} only \emph{convexity} bounds are used. While we use estimates on $\zeta(1+it)$ and $1/\zeta (1+it)$ in both cases, the estimate for $(\zeta'/\zeta )(1+it)$ is required only for the theorem of Luo--Sarnak. Our results clarify the mechanism for quantum unique ergodicity of Eisenstein series.
\end{remark}

\begin{remark} Equation (\ref{luosarnak que}) was extended by Jakobson \cite{jakobson} to the unit tangent bundle of $X$.  Koyama \cite{koyama} extended the result to Eisenstein series for $\hbox{PSL}_2(\Z[i])$, and   Truelsen \cite{truelsen} to Eisenstein series  for $\hbox{PSL}_2(\mathcal{O}_K)$, with $\mathcal{O}_K$ the integers of  a totally real field $K$ of finite degree over $\mathbb{Q}$ with narrow class number one. 

In both cases bounds of the type (\ref{one}), (\ref{two}), and  (\ref{three}) above are used. In the case of $K=\Q  (i)$  the subconvex estimate analogous to (\ref{one}) was established by Petridis and Sarnak \cite{petr4}, and the general case was established by Michel and Venkatesh \cite{michelvenkatesh}. As a substitute of (\ref{two}) and (\ref{three}) one uses estimates for the Dedekind zeta function $\zeta_{K}$. 

The analogous question for holomorphic Hecke cusp form of weight $k$  has recently been resolved
by Holowinsky and Soundararajan \cite{holowinsky}. Let $f_k$ be a  sequence of $L^2$-normalized holomorphic Hecke  cusp forms for the group $\hbox{SL}_2(\Z)$ of weight $k$ and let  $\phi_k(z):=y^{k/2}f_k(z)$. Then the measures $|\phi_k(z)|^2d\mu (z)$ converge weakly to $d\mu(z)$, as previously conjectured by Rudnick and Sarnak.  We note that in this
case $\phi_k$ is  eigenfunction of the weight $k$
Laplacian with eigenvalue $k/2(1-k/2)$.

\end{remark}

\section{Proofs}
The non-holomorphic Eisenstein series $E(z,s)$, $(z,s)\in \H^2\times \C$ is  defined for $\Re(s)>1$ by 
\begin{equation}
  \label{eq:1}E(z,s)=\sum_{\gamma\in\GinfG }\Im (\g z)^s.  
\end{equation}
Here
$\G=\hbox{PSL}_2(\mathbb{Z})$ and  $\G_\infty$ is the cyclic subgroup
generated by $z\mapsto z+1$.
 The Eisenstein series $E(z,s)$ admits a Fourier expansion of the cusp
 $i\infty$, see e.g. \cite[(3.25)]{iwaniec}
\begin{align}\nonumber E(z, s)&=\sum_{n\in\Z}a_n(y,s)e^{2\pi i n x}\\
&\label{eisenstein-expansion} =y^s+\phi (s)y^{1-s}+\frac{2y^{1/2}}{\xi
  (2s)}\sum_{n\neq  0}\abs{n}^{s-1/2}\sigma_{1-2s}(\abs{n})K_{s-1/2}(2\pi
\abs{n}y)e^{2\pi i nx}.\end{align}
Here $\xi (s)=\pi^{-s/2}\Gamma (s/2)\zeta (s)$ is the completed
Riemann zeta function satisfying the functional equation $\xi(s)=\xi(1-s)$,
$\sigma_{c}(n)$ is the sum of the $c$'th powers of the divisors of
$n$, and  $K_s(y)$ is the $K$-Bessel function. The scattering matrix is 
\begin{equation}\label{scattering}\phi (s)=\frac{\xi (2-2s)}{\xi (2s)}.\end{equation}
We notice that the corresponding expression in \cite{luosarnak}
is missing a factor of 2 in the non-zero terms. This is irrelevant for
their purpose but becomes crucial for ours. 

The spectral decomposition of $L^2(\Gamma\backslash \H^2 )$ allows us to consider
separately Maa{\ss} cusp forms and incomplete Eisenstein series.
\subsection{Maa{\ss} cusp forms} Since there is a basis of the cuspidal eigenspaces consisting of Hecke--Maa{\ss} cusp forms, we restrict our attention to those. 
\begin{lem}\label{maass estimates}Let $\phi_j$ be a Hecke--Maa{\ss} cusp form. Then 
  \begin{equation}
    \int_\GmodH\phi_j\abs{E(z,\sigma_t+it)}^2d\mu(z)\to 0,
  \end{equation}
as $t\to\infty$.
\end{lem}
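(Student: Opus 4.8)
The quantity we need to control is the inner product of a fixed Hecke–Maass cusp form $\phi_j$ against $\abs{E(z,s(t))}^2$. The natural strategy is to compute this triple product via the unfolding/Rankin–Selberg method and express it in terms of $L$-functions of $\phi_j$, then show the whole expression decays as $t\to\infty$. Let me sketch how I would carry this out.

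First I would write $\inprod{\abs{E(\cdot,s(t))}^2}{\phi_j}=\int_\GmodH E(z,s(t))\overline{E(z,s(t))}\phi_j(z)\,d\mu(z)$ and recognize this as a Rankin–Selberg type integral. The clean way to evaluate it is to note that $E(z,s)\overline{E(z,s)}$ restricted against a cusp form picks out, via unfolding one of the Eisenstein series against the $\Gamma_\infty$-periodic data, a Mellin transform of the constant term of $\phi_j\cdot E(\cdot,\bar s(t))$. Concretely, unfolding reduces the integral to $\int_0^\infty\int_0^1 \phi_j(z) \overline{E(z,s(t))} y^{s(t)}\frac{dx\,dy}{y^2}$ up to the standard bookkeeping, and since $\phi_j$ is cuspidal its zeroth Fourier coefficient vanishes, so only the nonzero Fourier modes of $E(z,\bar s(t))$ survive. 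The $x$-integral then forces the Fourier frequencies of $\phi_j$ and of $E(\cdot,\bar s(t))$ to match, and the remaining $y$-integral is a product of two $K$-Bessel functions against a power of $y$, which evaluates in closed form to a ratio of Gamma functions (a Barnes-type integral). The upshot is a formula of the shape
\begin{equation*}
\inprod{\abs{E(\cdot,s(t))}^2}{\phi_j}=\frac{\Lambda_j(s(t))}{\xi(2s(t))\,\xi(2\overline{s(t)})}\cdot(\text{ratio of }\Gamma\text{-factors}),
\end{equation*}
where $\Lambda_j(s(t))$ packages the relevant $L$-values of $\phi_j$ — essentially $L(\phi_j,s(t)-1/2)$ and the symmetric-square or Hecke $L$-function evaluated at arguments built from $s(t)$ and its conjugate — exactly as in the Luo–Sarnak computation but now with $s(t)$ off the critical line.

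Second, with the explicit formula in hand, the estimation is a matter of bounding each factor as $t\to\infty$ with $\Re s(t)=\sigma_t$ bounded and $\Im s(t)=t\to\infty$. The denominator contains $\xi(2s(t))$, whose $\Gamma$-factor $\Gamma(s(t))$ grows like $e^{-\pi t/2}t^{\sigma_t-1/2}$ in modulus by Stirling, giving enormous exponential decay in $t$; the numerator's own $\Gamma$-factors from the Bessel integral and from the completed $L$-functions of $\phi_j$ contribute their own Stirling asymptotics. The key point is that the balance of $\Gamma$-factors and the subconvex (indeed for this lemma even convexity suffices, since we only need decay, not a sharp rate) bounds on the $L$-values $L(\phi_j,\cdot)$ on vertical lines leave a net power-of-$t$ times exponentially decaying factor, so the whole inner product tends to $0$. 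Because $\phi_j$ is fixed, all implied constants depending on $\phi_j$ and on the spectral parameter of $\phi_j$ are harmless.

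The main obstacle I anticipate is not the $L$-function input — for a mere $\to 0$ statement, polynomial bounds in $t$ for the $L$-values on vertical strips (available by convexity, and uniformly since $\phi_j$ is fixed) are more than enough, and this is why the authors remark that only convexity is needed when $\sigma_\infty>1/2$. The delicate part is rather the \emph{bookkeeping of the Gamma factors} and ensuring the Stirling asymptotics are applied uniformly as $\sigma_t\to\sigma_\infty$ while $t\to\infty$: one must check that the exponential decay $e^{-\pi t/2}$ coming from $1/\xi(2s(t))$ genuinely dominates the at-most-polynomial (times possibly mild exponential from the Bessel-integral $\Gamma$-factors, which should cancel) growth of everything else. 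I would therefore organize the proof around first writing down the closed-form $\Gamma$-ratio from the Bessel integral cleanly, then collecting all $\Gamma$-factors into a single quotient whose modulus I estimate by Stirling, and only afterward inserting the crude $L$-function bounds; this separation makes the dominance of the exponential decay transparent and isolates where the $K$-Bessel integral's convergence (which requires $\Re s(t)$ in an appropriate range) is used.
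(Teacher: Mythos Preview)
Your Rankin--Selberg strategy matches the paper's, but the mechanism you describe for the decay is wrong, and this leads to a genuine gap. You assert that $1/\xi(2s(t))$ contributes ``enormous exponential decay'' $e^{-\pi t/2}$; in fact $\abs{\Gamma(s(t))}\sim \sqrt{2\pi}\,t^{\sigma_t-1/2}e^{-\pi t/2}$ sits in the \emph{denominator}, so $1/\xi(2s(t))$ contributes exponential \emph{growth}. When you collect all the Gamma factors correctly the exponentials cancel \emph{exactly}: in the closed form the paper obtains,
\[
J_j(t)=\frac{\pi^{2it}}{2}\,L(\phi_j,\tfrac12-2it)\,L(\phi_j,2\sigma_t-\tfrac12)\,
\frac{\prod\Gamma\!\left(\frac{\overline{s(t)}\pm it_j\pm(s(t)-\frac12)}{2}\right)}{\abs{\Gamma(s(t))\,\zeta(2s(t))}^2},
\]
two of the four numerator arguments, namely $(2\sigma_t-\tfrac12\pm it_j)/2$, are bounded, while the other two have imaginary part $\sim -t$ and contribute $e^{-\pi t/2}$ each; the resulting $e^{-\pi t}$ cancels precisely against $\abs{\Gamma(s(t))}^{-2}\sim e^{\pi t}t^{1-2\sigma_t}$, leaving a pure power $t^{1/2-2\sigma_t}$ and no exponential margin at all.

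Because there is no exponential to fall back on, the $L$-function input is the crux, not a formality. One of the $L$-values is $L(\phi_j,\tfrac12-2it)$, on the critical line; combined with $t^{1/2-2\sigma_t}$ and $\abs{\zeta(2s(t))}^{-2}$, the convexity bound $L(\phi_j,\tfrac12+it)\ll t^{1/2+\epsilon}$ yields only $t^{1-2\sigma_t+\epsilon}\log^2 t$, which does \emph{not} tend to $0$ when $\sigma_t\to 1/2$. The paper therefore splits into two cases: for $\sigma_t$ bounded away from $1/2$ convexity indeed suffices (then $\abs{\zeta(2s(t))}^{-2}$ is bounded and $1-2\sigma_t<0$), but for $\sigma_t$ not bounded away from $1/2$ one genuinely needs a subconvex bound $L(\phi_j,\tfrac12+it)\ll t^{1/2-\delta}$ (Meurman's $t^{1/3+\epsilon}$) together with $\abs{\zeta(2s(t))}^{-1}\ll\log t$. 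So your claim that ``for a mere $\to 0$ statement \dots\ convexity \dots\ [is] more than enough'' is false exactly in the regime $\sigma_\infty=1/2$ that Theorem~\ref{maintheorem1} covers.
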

\begin{proof}
  
The Maa{\ss} cusp form $\phi_j$ has a Fourier expansion
\begin{equation*}\phi_j (z)=y^{1/2}\sum_{n\ne 0} \lambda (n)K_{it_j}(2\pi n y)e ( nx),\end{equation*}
with $\lambda (1)=1$. We assume  that it is even, since, if it is odd, $\langle \phi_j, \mu_{s(t)}\rangle =0$.
 Being a Hecke eigenform, $\phi_j$ has an $L$-series with Euler product
\begin{equation*}L(\phi_j, s)=\sum_{n=1}^{\infty}\frac{\lambda (n)}{n^s}=\prod_{p}(1-\lambda (p)p^{-s}+p^{-2s})^{-1}.\end{equation*}
We want to understand the behavior as $t\rightarrow\infty$ of 
\begin{equation}
J_j(t)=\int_{\Gamma\backslash\H^2}\phi_j (z)\abs{E(z, s(t))}^2 d\mu(z).
\end{equation} 
We calculate 
\begin{equation}
I_j(s)=\int_{\Gamma\backslash\H^2}\phi_j (z)E(z, s(t))E(z, s) d\mu(z),
\end{equation} 
and  set $s=\overline{s(t)}$ to recover $J_j(t)$.
For fixed $s$, $I_j(s)$ is a  holomorphic function of $w=s(t)$. In \cite{luosarnak} this function is identified for $w=1/2+it$, so we use the principle of analytic continuation to deduce that
\begin{equation*}
I_j(s)=\frac{R(s)}{\xi (2s(t))}\frac{\prod\Gamma \left(\frac{s\pm it_j \pm (s(t)-1/2)}{2}\right)}{2\pi^s\Gamma (s)} ,
\end{equation*}
with
\begin{equation*}R(s)=\frac{L(\phi_j, s-s(t)+1/2)L(\phi_j, s+s(t)-1/2)}{\zeta (2s)}.\end{equation*}
We plug $s=\overline{s(t)}$ to get
\begin{align*}
J_j(t)&=&2^{-1}\pi^{s(t)-\overline{s(t)}}L(\phi_j, 1/2-2it)L(\phi_j, 2\sigma_t-1/2)\frac{\prod\Gamma \left(\frac{\overline{s(t)}\pm it_j \pm (s(t)-1/2)}{2}\right)}{|\Gamma (s(t))\zeta (2s(t))|^2} .
\end{align*}
We apply Stirling's formula \cite[5.112]{iwaniec-kowalski} in the form
\begin{equation}\label{stirling}
  \abs{\Gamma(\sigma+it)}=\sqrt{2\pi}\abs{t}^{\sigma-1/2}e^{-\frac{\pi}{2}\abs{t}}(1+O(\abs{t}^{-1}))
\end{equation}
uniformly for $\abs{\sigma}\le M$. Using this we find that the quotient of Gamma factors is $\ll_j \abs{t}^{1/2-2\sigma_t}$.

If $\sigma_t$ is bounded away from 1/2,  the function $\abs{\zeta(2s(t))}^{-2}$ is bounded and the convexity estimate 
$L(\phi_j, 1/2+it)\ll t^{1/2}$ suffices to guarantee that $\lim J_j (t)=0.$

If $\sigma_t$ is not bounded away from 1/2 and we need non-trivial estimates for $\zeta(2s(t))^{-1}$ and $L(\phi_j, 1/2+it)$ to reach the same conclusion.  Such estimates are certainly available: the estimate
\begin{equation}\label{bound}
 \log^{-1} \abs{t} \ll \abs{\zeta(2s(t))}\ll \log \abs{t}
\end{equation}
is classical in the theory of the Riemann zeta function (see \cite[3.6.5 and 3.11.8]{titchmarsh}), and the subconvexity estimate 
\begin{equation}
  L(\phi_j, 1/2+it)=O_{j,\e}(\abs{t}^{1/3+\e})
\end{equation} was proved by Meurman \cite{meurman}. We note that \emph{any} subconvexity estimate $L(\phi_j, 1/2+it)=O(\abs{t}^{1/2-\e})$ suffices to show that $\lim J_j (t)=0.$
\end{proof}

\subsection{Incomplete Eisenstein series}
We now concentrate on the contribution of the incomplete Eisenstein series. 
Let $h(y)\in C^{\infty}(\R^+)$ be a function which decreases rapidly at
$0$ and $\infty$. This means that $h(y)=O_N(y^N)$ for $0<y\le 1$ and 
$h(y)=O(y^{-N})$ for $y\gg 1$ for all $N\in \mathbb N$. Its Mellin transform is
\begin{equation}
H(s)=\int_0^{\infty}h(y)y^{-s}\frac{dy}{y}
\end{equation}
and the Mellin inversion formula gives
\begin{equation}\label{mellininversion}
h(y)=\frac{1}{2\pi i}\int_{a-i\infty}^{a+i\infty}H(s)y^s\,ds
\end{equation}
for any $a \in\R$. The function $H(s)$ is entire and $H(\sigma +it)$ is in the
 Schwartz space in the $t$ variable for any $\sigma\in\R$.
We consider the incomplete Eisenstein series
\begin{equation}
F_h(z)=\sum_{\gamma\in\Gamma_{\infty}\backslash\Gamma}h(\Im (\gamma z))=\frac{1}{2\pi i}\int_{a-i\infty}^{a+i\infty}H(s)E(z, s)\,ds.
\end{equation}
\begin{lem}\label{incomplete eisenstein estimates}
  Let $F_h$ be an incomplete Eisenstein series as above. Then 
  \begin{align*}  
     \int_\GmodH F_h(z)  \abs{E(z,\sigma_t+it)}^2&d\mu(z)\sim\\
&   \begin{cases}\int_\GmodH F_h(z) E(z,2\sigma_\infty)d\mu(z),&\textrm{
         if }\sigma_\infty\neq 1/2,\\
\int_\GmodH F_h(z)
d\mu(z){6\pi^{-1}\log t},
&\textrm{
         if }{(\sigma_t-1/2)\log t\to 0,}\end{cases}
  \end{align*}
as $t\to\infty$.
\end{lem}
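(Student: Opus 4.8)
The plan is to unfold the incomplete Eisenstein series and reduce everything to the zeroth Fourier coefficient of $\abs{E(z,s(t))}^2$. Since $F_h(z)=\sum_{\gamma\in\GinfG}h(\Im(\gamma z))$ and $\abs{E(z,s(t))}^2$ is $\Gamma$-invariant, unfolding followed by Parseval in the $x$-variable gives
\[\int_\GmodH F_h(z)\abs{E(z,s(t))}^2 d\mu(z)=\int_0^\infty h(y)\Big(\sum_{n\in\Z}\abs{a_n(y,s(t))}^2\Big)\frac{dy}{y^2},\]
where the $a_n(y,s)$ are the coefficients in (\ref{eisenstein-expansion}). I first isolate the term $n=0$. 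From $\abs{a_0(y,s(t))}^2=y^{2\sigma_t}+\abs{\phi(s(t))}^2y^{2-2\sigma_t}+2\Re(\phi(s(t))y^{1-2it})$ the leading piece contributes $\int_0^\infty h(y)y^{2\sigma_t}\,dy/y^2\to\int_0^\infty h(y)y^{2\sigma_\infty}\,dy/y^2$, i.e. the $y^{2\sigma_\infty}$ part of $\int_\GmodH F_h E(z,2\sigma_\infty)\,d\mu$. The oscillatory piece has $y$-integral $2\Re(\phi(s(t))H(2it))$, which tends to $0$ because $H$ is Schwartz in the imaginary direction and $\phi(s(t))$ grows at most polynomially; and when $\sigma_\infty>1/2$ the middle piece also vanishes, since Stirling (\ref{stirling}) and the convexity bound for $\zeta$ in the critical strip yield $\abs{\phi(s(t))}\ll\abs{t}^{(1-2\sigma_t)/2+\e}\to0$. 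Thus the genuine scattering contribution must arise from the nonzero frequencies.

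For the non-constant part I substitute (\ref{eisenstein-expansion}), expand $h$ by Mellin inversion (\ref{mellininversion}), evaluate the $y$-integral of $\abs{K_{s(t)-1/2}(2\pi\abs{n}y)}^2$ by the Mellin--Barnes formula for a product of two $K$-Bessel functions, and sum over $n$ with the weights $\abs{\sigma_{1-2s(t)}(\abs{n})}^2$ using Ramanujan's identity $\sum_n\sigma_a(n)\sigma_b(n)n^{-w}=\zeta(w)\zeta(w-a)\zeta(w-b)\zeta(w-a-b)/\zeta(2w-a-b)$ with $a=1-2s(t)$, $b=1-2\overline{s(t)}$. The non-constant contribution becomes
\[\Sigma(t)=\frac{8}{\abs{\xi(2s(t))}^2}\frac{1}{2\pi i}\int_{\Re(s')=\kappa}H(s')\,M(s',s(t))\,\frac{\zeta(s'-2\sigma_t+1)\zeta(s'+2it)\zeta(s'-2it)\zeta(s'+2\sigma_t-1)}{\zeta(2s')}\,ds',\]
where $M(s',s(t))$ is an explicit quotient of Gamma factors and $\kappa>2\sigma_t$. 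The rightmost pole is at $s'=2\sigma_t$, from $\zeta(s'-2\sigma_t+1)$. Its residue collapses dramatically: the factor $\abs{\zeta(2\sigma_t+2it)}^2$ and the $t$-dependent Gamma factors of $M$ cancel against $\abs{\xi(2s(t))}^2$, and a $\Gamma$-duplication/reflection identity shows the survivor equals exactly $\phi(2\sigma_t)=\xi(2-4\sigma_t)/\xi(4\sigma_t)$ of (\ref{scattering}). Hence this pole contributes $\phi(2\sigma_t)H(2\sigma_t)$, which in the limit is precisely the scattering part $\int_0^\infty h(y)\phi(2\sigma_\infty)y^{1-2\sigma_\infty}\,dy/y^2$ of the target.

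For $\sigma_\infty>1/2$ I then move the contour to a line $1<\Re(s')=c<2\sigma_t$, crossing only $s'=2\sigma_t$ (the poles at $s'=2-2\sigma_t$, at $s'=1\pm2it$, and the zeros of $\zeta(2s')$ all lie in $\Re(s')\le1$). By Stirling the prefactor $M(s',s(t))/\abs{\xi(2s(t))}^2$ is of size $\asymp\abs{t}^{c-2\sigma_t}$ there, while the remaining zeta factors stay bounded on this line, so the shifted integral is $O(\abs{t}^{c-2\sigma_t})\to0$. Therefore $\Sigma(t)\to\phi(2\sigma_\infty)H(2\sigma_\infty)$, and adding the constant term reproduces $\int_\GmodH F_h E(z,2\sigma_\infty)\,d\mu$; note that only convexity bounds for $\zeta$ are used, in agreement with the introduction.

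The delicate case is $\sigma_\infty=1/2$, and this is where I expect the main obstacle. Now the poles at $s'=2\sigma_t$ and $s'=2-2\sigma_t$ both approach $s'=1$ and coalesce into the double pole $\zeta(s')^2$ of the Luo--Sarnak situation, so one must move past both and add the two residues; each blows up like $(2\sigma_t-1)^{-1}$ through $\zeta(4\sigma_t-1)$ and $\zeta(3-4\sigma_t)$, and only their sum is finite. The surviving $\log\abs{t}$ comes from the $t$-dependent Gamma factors of $M(2-2\sigma_t,s(t))$ and from $\abs{\zeta(2-2\sigma_t+2it)}^2$: differentiating the coalescing residue in $\sigma_t$ brings down a digamma $\psi((1+2it)/2+O(1))\sim\log\abs{t}$, exactly mirroring the derivative of the on-line double-pole residue, with the constant $6\pi^{-1}=2/\vol{\GmodH}$ emerging from the residue arithmetic. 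Controlling this coalescence and the shifted integral uniformly in $t$ is the crux: it requires the classical bounds $\log^{-1}\abs{t}\ll\abs{\zeta(2s(t))}\ll\log\abs{t}$ from (\ref{bound}) together with a subconvexity estimate for $\zeta$ on the critical line, and the hypothesis $(\sigma_t-1/2)\log t\to0$ is precisely what forces $\abs{t}^{\pm(2\sigma_t-1)}=1+o(1)$, so that the two off-line residues genuinely reconstruct the on-line double pole rather than a different limit.
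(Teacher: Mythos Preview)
Your argument is essentially the paper's: unfold, Parseval, then express the non-constant part via Mellin inversion, the $K$-Bessel Mellin integral and Ramanujan's identity, and shift the contour. The one genuine difference is the depth of the shift. For $\sigma_\infty>1/2$ you stop at a line $1<c<2\sigma_t$, crossing only the pole at $s'=2\sigma_t$ and then bounding the remaining integral by $O(\abs{t}^{c-2\sigma_t})$; the paper instead pushes all the way to $\Re(s)=1/2$ and must dispose of three further residues (at $1\pm2it$, $2-2\sigma_t$, $2\sigma_t-1$) individually using $\abs{\phi(s(t))}\to 0$. Your route is legitimately shorter here. For $\sigma_\infty=1/2$ your outline is correct but underspecified: one must shift past $\Re(s)=1$, so the residues at $1\pm2it$ (which you omit) also appear, though they are harmless by the rapid decay of $H$; and the paper does not extract the $\log t$ via a digamma expansion but by observing that the two coalescing residues combine to $H(1)\,(1-\abs{\phi(s(t))}^2)/\bigl(4\xi(2)(\sigma_t-1/2)\bigr)+O(1)$ and then applying the mean value theorem to $\sigma\mapsto\abs{\phi(\sigma+it)}^2$ together with $\phi'/\phi(\sigma\pm it)\sim -4\log t$. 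This packaging makes the role of the hypothesis $(\sigma_t-1/2)\log t\to 0$ completely explicit and avoids tracking the separate Gamma and zeta derivatives you allude to.
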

\begin{proof}
We choose $a$ such that $a>2\sigma_t$ for all $t$. The function $F_h(z)$ is smooth and rapidly decreasing in the cusp. Then  unfolding and using Parseval we get
\begin{align}
\nonumber\int_\GmodH F_h(z)d\mu_{s(t)}(z)=&\int_{\GmodH}F_h(z)|E(z,s(t))|^2 d\mu(z) \\ \nonumber=&\int_0^{\infty}\int_0^1 h(y)|E(z,s(t))|^2 \frac{dx\, dy}{y^2}\\=&
\label{afterparseval}\int_0^\infty h(y) \left( \sum_{n\in \Z}\abs{a_{n}(y,s(t))}^2\right)\frac{dy}{y^2}.
\end{align}
\subsection{Contribution of the constant term.} By  
(\ref{eisenstein-expansion}) we have
\begin{equation*}
 \abs{a_0(y,s(t))}^2= y^{2\sigma_t}+2\Re(\phi(s(t))y^{1-2it})+\abs{\phi(s(t))}^2y^{2-2\sigma_t}.
\end{equation*}
We analyze the three terms separately. The first term is 
\begin{equation*}
 \int_0^{\infty}h(y)y^{2\sigma_t-1}\frac{dy}{y}=H(1-2\sigma_t),
\end{equation*}
which converges to $H(1-2\sigma_{\infty})$ when $t\to\infty$. Next
\begin{align*}
 \phi(s(t))\int_0^{\infty}h(y)y^{-2it}\frac{dy}{y}
=\phi(s(t))H(2it).
\end{align*}
The function $H(2it)$ decays rapidly and  $\phi(s(t))$ is bounded, see \cite[(8.6)]{selberg}. By analyzing the same expression with $\overline{\phi(s(t))}y^{1-2it}$ instead of $\phi(s(t))y^{1+2it}$ we find that the term in (\ref{afterparseval}) involving $\Re(\phi(s(t))y^{1-2it} )$ tends to zero.  
 
The last expression coming from the constant term is
\begin{align*}
 \abs{\phi(s(t))}^2\int_0^{\infty}h(y)y^{1-2\sigma_t}\frac{dy}{y}=\abs{\phi(s(t))}^2H(2\sigma_t-1).
\end{align*}
Certainly $H(2\sigma_t-1)\to H(2\sigma_\infty-1)$, as $t\to\infty$ and $\abs{\phi(s(t))}$ is bounded.

 Using the explicit expression for 
$\phi (s)$ in (\ref{scattering}) we have  better control of  the behavior of $\phi(s(t))$  when $\sigma_\infty\neq 1/2$. We have 
\begin{equation*}
  \abs{\phi(s(t))}=\frac{\abs{\xi(2-2{s(t)})}}{\abs{\xi(2s(t))}}=\pi^{2\sigma_t-1}\abs{\frac{\zeta(2(1-\sigma_t)-2it)}{\zeta(2\sigma_t+2it)}}\abs{\frac{\G(1-\sigma_t-it)}{\G(\sigma_t+it)}}.
\end{equation*}
Using the convexity bound $\zeta (\sigma+it)=O(\abs{t}^{(1-\sigma)/2+\e})$  we
get $$\zeta(2(1-\sigma_t)+it)=O( \abs{t}^{\sigma_t-1/2+\e}).$$ By (\ref{bound})
$$\frac{1}{\zeta (2\sigma_t+2it)}=O( \log \abs{t}).$$ The quotient of
$\G$-factors is asymptotic to   $\abs{t}^{1-2\sigma_t}$ by (\ref{stirling}). We therefore conclude that, when $\sigma_\infty\neq 1/2$, we have 
\begin{equation}\label{betterbound}
  \abs{\phi(s(t))}\to 0 
\end{equation}
as $t\to\infty$.

To summarize we have proved that the contribution of the constant term
in (\ref{afterparseval}) converges to  $H(1-2\sigma_\infty)$ if
$\sigma_\infty\neq 1/2$ and is $O(1)$ if $\sigma_\infty=1/2$.

\subsection{Contribution of  the non-constant terms.}  By  (\ref{eisenstein-expansion}) and (\ref{mellininversion}) the contribution equals
\begin{align*}
 A(t)&= \int_0^\infty\frac{1}{2\pi i}\int_{\Re(s)=a}H(s)y^sds \sum_{n=1}^\infty\frac{8y}{\abs{\xi(2s(t))}^2}n^{2\sigma_t-1}\abs{\sigma_{1-2s(t)}(n)}^2\abs{K_{s(t)-1/2}(2\pi ny)}^2\frac{dy}{y^2}\\
&=\int_0^\infty\frac{1}{2\pi i}\int_{\Re(s)=a}\!\!\!\!\!\!\! H(s)\sum_{n=1}^\infty\frac{y^s}{(2\pi n)^s} \frac{8}{\abs{\xi(2s(t))}^2}n^{2\sigma_t-1}\abs{\sigma_{1-2s(t)}(n)}^2\abs{K_{s(t)-1/2}(y)}^2ds\frac{dy}{y}\\
&=\frac{1}{2\pi i}\int_{\Re(s)=a}\!\!\!\!\!\!\! H(s)\frac{1}{(2\pi)^s} \frac{8}{\abs{\xi(2s(t))}^2}\sum_{n=1}^\infty \frac{\abs{\sigma_{1-2s(t)}(n)}^2}{n^{s-(2\sigma_t-1)}}\int_0^\infty y^s\abs{K_{s(t)-1/2}(y)}^2\frac{dy}{y}ds.\end{align*}
We now use \cite[6.576 (4)]{gradshteyn} to calculate the integral
involving the $K$-Bessel functions,  and the Ramanujan identity
\begin{equation*}
  \sum_{n=1}^\infty\frac{\sigma_a(n)\sigma_b(n)}{n^s}=\frac{\zeta(s)\zeta(s-a)\zeta(s-b)\zeta(s-a-b)}{\zeta(2s-a-b)}
\end{equation*}
to see that
\begin{align*}A(t)&=\frac{1}{2\pi i}\int_{\Re(s)=a}\!\!\!\!\!\!\! H(s) \frac{1}{\abs{\xi(2s(t))}^2}\frac{\xi(s-2\sigma_t+1)\xi(s-2it)\xi(s+2it)\xi(s+2\sigma_t-1)}{\xi(2s)}ds\\
&=\frac{1}{\abs{\xi(2s(t))}^22\pi i}\int_{\Re(s)=a}B(s)ds,
\intertext{where $B(s)$ equal $H(s)$ times the $\xi$-factors. Since $\xi(s)$ has poles at $s=0,1$  the poles of $B(s)$ in the region $\Re(s)\ge 1/2$ are at $1\pm 2it$, $2\sigma_t$, $2-2\sigma_t$, $\pm (2\sigma_t-1)$ and $\pm 2it$. We now move the line of integration to $\Re(s)=1/2$. By considering the Stirling asymptotics for the $\G$-factors, convexity bounds for the zeta functions, Eq.~(\ref{bound}), and the rapid decay of $H(s)$ we see that $B(s)$ decays rapidly in vertical strips and this allows to move the line of integration. We find that}
A(t)&=\frac{1}{\abs{\xi(2s(t))}^2}\left(\res_{s=1\pm 2it
}B(s)+\res_{s=2\sigma_t }B(s)\right.\\&\left.\quad+
\delta_{t}\cdot \res_{s=2-2\sigma_t
}B(s)+(1-\delta_{t})\cdot \res_{s=2\sigma_t -1}B(s) +\frac{1}{2\pi i}\int_{\Re(s)=1/2}B(s)ds\right),
\end{align*}
with $\delta_{t}=1$ if ${\sigma_t<3/4}$ and $0$ otherwise. We analyze these five terms.

(i) Using Stirling, convexity bounds on the zeta functions. (\ref{bound}) and the rapid decay of $H(1\pm 2it)$ the term \begin{equation*}
 \frac{1}{\abs{\xi(2s(t))}^2} \res_{s=1\pm 2it }B(s)=H(1\pm 2it)\frac{\xi(1\pm 4it)\xi(1\pm 2it-2\sigma_t+1)\xi(1\pm 2it+2\sigma_t-1)}{\abs{\xi(2s(t))}^2\xi(2\pm 4it)}
\end{equation*}
 tends to zero as $t\to\infty$.

(ii)  We now consider the second term:
\begin{equation*}
  \frac{1}{\abs{\xi(2s(t))}^2}\res_{s=2\sigma_t }B(s)=H(2\sigma_t)\frac{\xi(4\sigma_t-1)}{\xi(4\sigma_t)},
\end{equation*}
which converges to  $H(2\sigma_\infty)\frac{\xi(4\sigma_\infty-1)}{\xi(4\sigma_\infty)}$ when $t\to\infty$ and $\sigma_\infty\neq 1/2$. When $\sigma_t\to 1/2$ it behaves asymptotically like
\begin{equation*}
  H(1)\frac{1}{4\xi(2)(\sigma_t-1/2)}.
\end{equation*}

(iii)  We then move on to the third term:
\begin{align*}
  \frac{1}{\abs{\xi(2s(t))}^2}\res_{s=2-2\sigma_t }B(s)&=H(2-2\sigma_t)\frac{\xi({3}-4\sigma_t)\xi(2-2\sigma_t-2it)\xi(2-2\sigma_t+2it)}{\abs{\xi(2s(t))}^2\xi(4-4\sigma_t)}\\&=H(2-2\sigma_t)\abs{\phi(s(t))}^2\frac{\xi({3}-4\sigma_t)}{\xi(4-4\sigma_t)}.
\end{align*}
If $\sigma_\infty\neq 1/2$ we can use (\ref{betterbound}) to conclude
that this tends to zero, and if  $\sigma_\infty=1/2$ it 
{behaves like} \begin{equation*}
 { H(1)\frac{-\abs{\phi(s(t))}^2}{4\xi(2)(\sigma_t-1/2)}+O(1).}
\end{equation*}

(iv)  The fourth term is 
\begin{align*}
  \frac{1}{\abs{\xi(2s(t))}^2}\res_{s=2\sigma_t-1 }B(s)&=H(2\sigma_t-1)\abs{\phi(s(t))}^2.
\end{align*}
When $\sigma_\infty\neq 1/2$ this tends to zero and when
$\sigma_\infty=1/2$ it is bounded, by the same arguments as for the
third term.

(v) 
We now deal with the last term, i.e.
\begin{align*}
  \frac{1}{\abs{\xi(2s(t))}^22\pi i}&\int_{\Re(s)=1/2}B(s)ds=\frac{1}{2\pi \abs{\xi(2\sigma_t+2it)}^2}\\&\times\int\limits_{-\infty}^\infty H(1/2+i\tau)\frac{\abs{\xi(1/2+2\sigma_t-1+i\tau)}^2\xi(1/2+i(\tau-2t))\xi(1/2+i(\tau+2t))}{\xi(1+2i\tau)}d\tau.
\end{align*}
We note that $H(1/2+i\tau)$ is of rapid decay.  We study first the exponential behaviour of the integral as a function of $t$. Stirling asymptotics for the integrand give:
$$(e^{-\pi |\tau |/4})^2e^{-\pi |\tau/2-t|/2}e^{-\pi|\tau/2+t|/2}(e^{\pi|\tau|/2   })
\le e^{-\pi t},$$ which cancels with the exponential growth of $1/\abs{\xi(2s(t))}^2$. Using (\ref{bound}), the rapid decay of $H(1/2+i\tau)$, and any polynomial bound in $\tau$ of $\zeta(2\sigma_t-1/2+i\tau)$, we are reduced to estimate in $t$ the integral
$$ 
\frac{\log|t|}{(t^{-1/2+\sigma_t})^2}\int_{-\infty}^{\infty}\tilde H(\tau)(1+|\tau+2t|)^{-1/4}(1+|\tau-2t|)^{-1/4}|\zeta (1/2+i(\tau-2t))\zeta (1/2+i(\tau+2t))|\, d\tau,
$$where $\tilde H$ is some function of rapid decay. We separate now the two cases $\sigma_{\infty}>1/2$ and $\sigma_\infty=1/2$. In the first case we use the convexity bound on the $\zeta$ function to estimate the expression as
$$\frac{\log|t|}{(t^{-1/2+\sigma_t})^2}\int_{-\infty}^{\infty}\tilde H(\tau)(1+|\tau+2t|)^{\e}(1+|\tau-2t|)^{\e}\, d\tau  =o(1),$$
as $\sigma_{\infty}>1/2$. For the second case we can use any subconvex bound $\zeta (1/2+it)=O(\abs{t}^{1/4-\delta})$, for instance 
 Weyl's bound \cite[Theorem 5.5]{titchmarsh}
\begin{equation*}
  \zeta(1/2+it)\ll\abs{t}^{1/6+\e}.
\end{equation*}
We are reduced to estimate in $t$ the integral
\begin{align*}
\frac{\log|t|}{(t^{-1/2+\sigma_t})^2}&\int_{-\infty}^{\infty}\tilde H(\tau)(1+|\tau+2t|)^{-1/4+1/4-\delta}(1+|\tau-2t|)^{-1/4+1/4-\delta} d\tau,
\\=t^{1-2\sigma_t+\e}&\int_{-\infty}^{\infty}\tilde H(\tau)(1+|\tau+2t|)^{-\delta}(1+|\tau-2t|)^{-\delta}\, d\tau =o(1).  \end{align*}
This concludes the
evaluation of the non-constant terms in (\ref{afterparseval}).

To summarize we have proved that if $\sigma_\infty\neq 1/2$ the function $\int_\GmodH F_h(z)d\mu_{s(t)}(z)$ converges to
\begin{equation*}
  H(1-2\sigma_\infty)+H(2\sigma_\infty)\frac{\xi(4\sigma_\infty-1)}{\xi(4\sigma_\infty)}
\end{equation*}
 as $t\to\infty$, and, if  $\sigma_\infty= 1/2$, 
\begin{equation*}
 \int_\GmodH F_h(z)d\mu_{s(t)}(z) =
 {H(1)\frac{1-\abs{\phi(s(t))}^2} {4\xi(2)(\sigma_t-1/2)} +O(1)},
\end{equation*}
 as $t\to\infty$.
This finishes the proof once we notice that 
\begin{align*}
  H(1-2\sigma_\infty)+ & H(2\sigma_\infty)\frac{\xi(4\sigma_\infty-1)}{\xi(4\sigma_\infty)}\\
  &=\int_0^\infty
  h(y)\left(y^{(2\sigma_\infty-1)+1}+\phi(2\sigma_\infty)y^{-2\sigma_\infty+1}\right)\frac{dy}{y^2}\\
&=\int_0^\infty h(y)\left(\int_0^1E(z,2\sigma_\infty)dx\right)\frac{dy}{y^2}\\
&=\int_{\GmodH} F_h(z)E(z,2\sigma_\infty)d\mu(z)
\end{align*}
and
\begin{equation*}
  H(1)=\int_{\GmodH}F_h(z)d\mu(z),
\end{equation*}
while $$\frac{1-\abs{\phi(s(t))}^2}
{4\xi(2)(\sigma_t-1/2)}\sim\frac{6}{\pi}\log t.$$
The last claim is seen as follows: By using the mean value
theorem twice on the function $\sigma\mapsto
\phi(\sigma+it)\phi(\sigma-it)$  we
find $$\frac{1-\abs{\phi(\sigma+it)}^2}{1/2-\sigma}=\left(1-(1/2-\sigma')\abs{\phi(\sigma''+it)}^2\frac{\phi'}{\phi}(\sigma''\pm
  it)\right)\frac{\phi'}{\phi}(\sigma'\pm it) $$
for some $1/2\leq \sigma''\leq \sigma'\leq \sigma$,
where $\frac{\phi'}{\phi}(\sigma\pm
    it)$ means $\frac{\phi'}{\phi}(\sigma+
    it)+\frac{\phi'}{\phi}(\sigma''-
    it).$ The claim now follows from $(\sigma_t-1/2)\log t\to 0$
    and the well-known fact that \begin{equation}\label{phi/phi}\frac{\phi'}{\phi}(\sigma\pm
    it)\sim -4\log t\end{equation} as $t\to\infty$ for $\sigma \geq
  1/2$ and that
    $\abs{\phi(\sigma+it)}$ bounded for $\Re(s)\geq 1/2$ and $t>1$. The estimate (\ref{phi/phi}) follows from the bounds 
on the zeta function \cite[Theorem 5.17]{titchmarsh}, combined 
with the Stirling asymptotics on the Gamma function. 

\end{proof}

It is straightforward to verify that Theorem \ref{maintheorem1}, and
Theorem \ref{maintheorem2} follow from
Lemma \ref{maass estimates} and Lemma \ref{incomplete eisenstein
  estimates} using an approximation argument like in the proof of \cite[Proposition 2.3]{luosarnak}.

\begin{proof}[Proof of Theorem \ref{scatteringtheorem}] 
We have
\begin{align*}
 \abs{u_{\rho_n}}^2d\mu(z) 
&=\abs{(\res_{s=\rho_n}\phi(s))^{-1} \res_{s=\rho_n}E(z,s)}^2d\mu(z)\\ 
&=\abs{(\res_{s=\rho_n}\phi(s))^{-1} \res_{s=\rho_n}\phi(s)E(z,1-s)}^2d\mu(z)\\
&=\abs{E(z,1-\rho_n)}^2d\mu(z).
\end{align*}
The result now follows from Theorem \ref{maintheorem2} with $\sigma_\infty=1-\gamma_\infty/2$. 
\end{proof}

\begin{proof}[Proof of Corollary \ref{maintheorem3}]
Let $f$ be a test function for the convergence in Corollary \ref{maintheorem3}. Then we use $$\frac{f(z)}{E(z, 2\sigma_{\infty})}$$ as test function for Theorem \ref{maintheorem2} to deduce that, as ${t\to\infty}$
$$\int_{\GmodH}\frac{f(z)}{E(z, 2\sigma_{\infty})}d\mu_{s(t)}\to\int_{\GmodH}\frac{f(z)}{E(z, 2\sigma_{\infty})}E(z, 2\sigma_{\infty})d\mu(z)=\int_{\GmodH}f(z)\, d\mu(z).$$
Finally one uses the approximation argument in \cite[Proposition 2.3]{luosarnak} to complete the proof.

\end{proof}

\end{document}